\theoremstyle{plain}
\newtheorem{theorem}{Theorem}[section]
\newtheorem{conjecture}[theorem]{Conjecture}
\newtheorem{proposition}[theorem]{Proposition}
\newtheorem{lemma}[theorem]{Lemma}
\newtheorem{corollary}[theorem]{Corollary}
\theoremstyle{definition}
\newtheorem{definition}[theorem]{Definition}
\newtheorem{hypothesis}[theorem]{Hypothesis}
\theoremstyle{remark}
\newtheorem{remark}[theorem]{Remark}
\DeclareMathOperator{\lcm}{lcm}
\DeclareMathOperator{\Li}{Li}
\DeclareMathOperator{\vp}{v_p}
\renewcommand{\gcd}{\operatorname{gcd}}
\newcommand{\PP}{\mathbb{P}}
\newcommand{\NN}{\mathbb{N}}
\newcommand{\picount}{\pi}
\title{\bfseries Primes in LCM recurrences}
\author{Beno\^{\i}t Cloitre}
\date{}
  \newcommand{\subjclass}[2][2020]{%
    \gdef\@subjclass{#2}%
    \gdef\@subjclassyear{#1}%
  }%
\subjclass[2020]{11N05, 11N13, 11B83, 11A41}
\begin{document}
\maketitle

\begin{abstract}
\noindent
We study an LCM-based analogue of Rowland's GCD-based prime-generating recurrence, introduced by the author in 2008. The multiplicative increments of this sequence are conjectured always to be $1$ or prime, but a complete proof requires a strengthening of Linnik's theorem on the least prime in an arithmetic progression that lies beyond current reach. We develop a Companion--Sieve framework that reduces the conjecture to an equidistribution problem for primes in the progression $-1\bmod p$, and applying the Bombieri--Vinogradov theorem we prove unconditionally that the conjecture holds for a set of integers of asymptotic density~$1$. We also give an effective finite reduction showing that any counterexample beyond a computable threshold involves only large prime factors. A closely related recurrence turns out to encode twin prime pairs through its increment pattern, and we prove a conditional density-$1$ result for it under a prime-index detection hypothesis, using an upper-bound Selberg sieve estimate for twin primes in arithmetic progressions. The analysis also leads to three new conjectures on the distribution of primes in arithmetic progressions.

\medskip

\noindent\textbf{Keywords:} LCM recurrence, prime-generating recurrence, Rowland sequence, primes in arithmetic progressions, Bombieri--Vinogradov theorem, Elliott--Halberstam conjecture, twin primes, Selberg sieve.
\end{abstract}

\section{Introduction}\label{sec:intro}

\subsection{Rowland's recurrence and the LCM variant}

In 2008, Rowland~\cite{Rowland} showed that the recurrence
\begin{equation}\label{eq:rowland}
a_1=7,\qquad a_n=a_{n-1}+\gcd(n,a_{n-1})\quad (n\ge 2)
\end{equation}
produces first differences $a_n - a_{n-1}$ that are always $1$ or prime. For this particular initial condition the proof is elementary. However, other GCD-based recurrences of the same type remain open, and even for Rowland's sequence the finer analysis of which primes appear and how often requires nontrivial arguments~\cite{CRR11}. The LCM variant studied here raises the difficulty further.

In the same year, we introduced the LCM variant (\href{https://oeis.org/A135504}{OEIS A135504}):
\begin{equation}\label{eq:lcm}
a_1=1,\qquad a_n=a_{n-1}+\lcm(n,a_{n-1})\quad (n\ge 2).
\end{equation}
This is the $K=1$ case of the one-parameter family $a_n=K\,a_{n-1}+\lcm(n,a_{n-1})$ for $K\in\NN^*$, on which we focus throughout except in Section~\ref{sec:twin} where the case $K=2$ is treated. Here the relevant quantity is the multiplicative increment
\begin{equation}\label{eq:bn}
b_n=\frac{a_n}{a_{n-1}}-1=\frac{n}{\gcd(n,a_{n-1})},
\end{equation}
whose first values are
\[
2, 1, 2, 5, 1, 7, 1, 1, 5, 11, 1, 13, 1, 5, 1, 17, 1, 19, 1, 1, 11, 23, \ldots
\]

\begin{conjecture}\label{conj:main}
For every $n\ge2$, $b_n\in\{1\}\cup\PP$.
\end{conjecture}

\noindent Numerical computation confirms this conjecture up to $n=10^6$.

These recurrences have no practical value for producing primes. Their interest lies in the window they open on prime distribution. Simple arithmetic recursions create sequences whose analysis requires, and tests, deep equidistribution results for primes in arithmetic progressions.

\subsection{Why LCM is harder than GCD}

Replacing GCD by LCM changes the problem in two essential ways. First, the sequence exhibits global accumulation. We have $a_n = \prod_{k=2}^n(1+b_k)$, so every past increment is permanently encoded. Second, Ruiz-Cabello~\cite{RuizCabello2017} showed that a pointwise proof of Conjecture~\ref{conj:main} requires, for every prime $p$ and every $k\ge1$, at least $k$ primes $q\equiv -1\pmod{p}$ with $q<p^{k+1}$. This amounts to Linnik's theorem \cite{Linnik44} with constant $L=2$, whereas the best known bound is $L\le 5$ due to Xylouris~\cite{Xylouris2011}, following the earlier work of Heath-Brown~\cite{HB92}. Our approach sidesteps this barrier by working on average rather than pointwise.

\subsection{Main results}

A direct analysis of $a_{n-1}=\prod_{k<n}(1+b_k)$ is difficult because its $p$-adic content receives contributions from all earlier indices, including composite ones, and those contributions depend recursively on the full history of the sequence.

Our method isolates a deterministic sub-product coming from prime indices, where the factors are explicit, and replaces the exact quotient $b_n$ by a tractable majorant. This relaxation loses information, but its local deficits are controlled by counts of primes in the progression $-1\bmod p$.

The counting problem thereby obtained lies in the range of Bombieri--Vinogradov on average. Our main results are as follows.

\begin{theorem}\label{thm:density1-intro}
The set of integers $n$ for which $b_n\in\{1\}\cup\PP$ has asymptotic density $1$.
\end{theorem}

Beyond density, we obtain an effective reduction of the conjecture to large prime factors.

\begin{theorem}\label{thm:finite-intro}
For every $P_0\ge2$, there exists an effectively computable bound $N^*(P_0)$ such that for all $n\ge N^*(P_0)$, any potential counterexample to Conjecture~\ref{conj:main} involves only prime factors larger than $P_0$.
\end{theorem}

Replacing the coefficient $1$ by $2$ in the recurrence leads to a variant with a richer arithmetic structure. Define $x_1=1$ and $x_n=2x_{n-1}+\lcm(n,x_{n-1})$, so that $x_n=x_{n-1}(2+c_n)$ where $c_n=n/\gcd(n,x_{n-1})$ plays the role of $b_n$. The sequence $(c_n)$ begins $2, 3, 1, 1, 1, 7, 2, 1, 1, 11, \ldots$ and is again conjectured to consist of $1$'s and primes. Unlike the original sequence where $b_p=p$ for every prime $p\ge 5$, the $K=2$ variant satisfies $c_p=1$ at certain primes, and this event turns out to be tightly linked to twin primes.

\begin{theorem}\label{thm:twin-intro}
If $(p,p+2)$ are twin primes with $p>5$ and $c_p=p$, then $c_{p+2}=1$. Conversely, under the global conjecture $c_n\in\{1\}\cup\PP$, if $q\ge5$ is prime and $c_q=1$, then $q-2$ is prime. In other words, the forced-$1$ increments at prime indices detect twin prime pairs.
\end{theorem}

The method used for the original sequence adapts to this variant, with the twin-prime loss playing the role of a controlled error term.

\begin{theorem}\label{thm:k2-intro}
Under a hypothesis specifying the behavior of $c_q$ at prime indices, the set of integers $n$ for which $c_n\in\{1\}\cup\PP$ has asymptotic density $1$.
\end{theorem}

\subsection{Organization}

Section~\ref{sec:prelim} collects notation and standard tools. The Companion--Sieve framework is developed in Section~\ref{sec:framework}. The density theorem is proved in Section~\ref{sec:density}, and the effective finite reduction together with the residual obstruction are analyzed in Section~\ref{sec:checkpoints}. The $K=2$ variant and twin primes are treated in Section~\ref{sec:twin}, including the conditional density-$1$ theorem. Section~\ref{sec:conclusion} presents a conditional result under the Elliott--Halberstam conjecture, formulates three conjectures motivated by the residual obstruction, and collects open problems.

\section{Preliminaries}\label{sec:prelim}

\subsection{Notation}

We denote by $\PP$ the set of prime numbers. We write $\vp(n)$ for the $p$-adic valuation of $n$, $\varphi(q)$ for Euler's totient, $\Li(x)=\int_2^x dt/\log t$ for the logarithmic integral, and
\[
\picount(x;q,a)=\#\{p\le x: p\in\PP,\, p\equiv a\pmod q\},
\qquad
E(x;q,a)=\picount(x;q,a)-\frac{\Li(x)}{\varphi(q)}
\]
for the prime counting function in progressions and its error term. As usual, $f\ll g$ means $|f|\le Cg$ for an absolute constant $C$, and $f\asymp g$ means $f\ll g\ll f$.

\subsection{Basic properties of the sequence}

\begin{lemma}\label{lem:basic}
For all $n\ge2$, we have $1\le b_n\le n$ and $b_n\mid n$. Moreover, $a_n$ is even for all $n\ge3$.
\end{lemma}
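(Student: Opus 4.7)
The statement splits cleanly into two independent assertions; neither presents any real obstacle, so the plan is mostly to organize the trivialities in the right order and say where the base case of the induction comes from.

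For the bounds on $b_n$, I would simply invoke the closed form already displayed in \eqref{eq:b_intro}, namely $b_n = n/\gcd(n,a_{n-1})$. Since $\gcd(n,a_{n-1})$ is a positive integer dividing $n$, it lies in $[1,n]$, and dividing $n$ by it yields a positive integer in $[1,n]$. That gives both inequalities at once; no induction is needed.

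For the parity claim, the plan is a one-line induction seeded by a short direct computation. First I would evaluate $a_2 = 1+\lcm(2,1)=3$ and $a_3=3+\lcm(3,3)=6$, establishing the base case $n=3$. For the inductive step, assume $a_n$ is even for some $n\ge 3$. Then $a_n \mid \lcm(n+1,a_n)$, so $\lcm(n+1,a_n)$ is even; hence $a_{n+1} = a_n + \lcm(n+1,a_n)$ is a sum of two even integers and is again even. This propagates evenness forever once it is achieved at $n=3$.

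The only thing to watch is that the induction cannot start at $n=2$, since $a_2=3$ is odd; the lemma itself is careful to exclude this case by writing ``$n\ge 3$'', and the base computation above explains why this cutoff is sharp. There is no analytic or combinatorial difficulty here, and no appeal to any of the later machinery (companion sieves, Bombieri--Vinogradov, etc.) is needed; the lemma is a purely formal consequence of the definitions \eqref{eq:lcm_intro} and \eqref{eq:b_intro}.
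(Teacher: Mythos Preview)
Your proof is correct and essentially identical to the paper's: both use $b_n=n/\gcd(n,a_{n-1})$ for the bounds and a direct induction seeded by the computation $a_2=3$, $a_3=6$ for the parity, with the inductive step relying on the fact that $a_{n-1}\mid\lcm(n,a_{n-1})$ so the latter inherits evenness. Your write-up is in fact slightly more careful than the paper's (you make the lower bound $b_n\ge 1$ explicit and spell out the divisibility justifying evenness of the $\lcm$), but there is no substantive difference.
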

\begin{proof}
That $b_n\mid n$ follows from $b_n=n/\gcd(n,a_{n-1})$. For parity, note that $a_2=3$ and $a_3=6$. For $n\ge 3$, $a_{n-1}$ is even, hence $\lcm(n,a_{n-1})$ is even, and $a_n=a_{n-1}+\lcm(n,a_{n-1})$ is even by induction.
\end{proof}

\subsection{Tools from analytic number theory}

\begin{theorem}[Siegel--Walfisz; cf.\ {\cite[Ch.~22]{Davenport}}, {\cite[Ch.~5.9]{IwaniecKowalski}}]\label{thm:SW}
For any fixed $A>0$, there exists $C(A)>0$ such that uniformly for $q\le (\log x)^A$ and $\gcd(a,q)=1$,
\[
|E(x;q,a)|\ll_A x\exp\bigl(-C(A)\sqrt{\log x}\bigr).
\]
\end{theorem}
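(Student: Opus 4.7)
The plan is to convert counts of primes into von Mangoldt sums, decompose by Dirichlet characters, and bound each character sum via the classical zero-free region of $L$-functions, handling the possible Siegel zero via Siegel's theorem. Concretely, setting $\psi(x;q,a)=\sum_{n\le x,\ n\equiv a\,(q)}\Lambda(n)$, prime powers contribute only $O(\sqrt{x})$ and partial summation converts any $\psi$-level bound into the stated $E(x;q,a)$-bound of the same shape. Orthogonality of characters modulo $q$ gives
\[
\psi(x;q,a)=\frac{1}{\varphi(q)}\sum_{\chi\bmod q}\overline{\chi}(a)\,\psi(x,\chi),\qquad \psi(x,\chi):=\sum_{n\le x}\chi(n)\Lambda(n),
\]
with the principal character yielding the main term $x/\varphi(q)$ up to the usual PNT error. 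It therefore suffices to establish, for every non-principal $\chi$ mod $q\le(\log x)^A$, the bound $\psi(x,\chi)\ll_A x\exp(-C(A)\sqrt{\log x})$.

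For this I would apply the truncated explicit formula
\[
\psi(x,\chi)=-\sum_{|\operatorname{Im}\rho|\le T}\frac{x^\rho}{\rho}\;+\;O\!\left(\frac{x\log^{2}(qTx)}{T}\right)+O(\log x),
\]
the sum running over non-trivial zeros of $L(s,\chi)$. The Landau--Page zero-free region $\operatorname{Re}\rho\le 1-c/\log\!\bigl(q(|\operatorname{Im}\rho|+2)\bigr)$ holds for all such zeros except possibly one real zero $\beta_\chi$ attached to a single real non-principal character. Choosing $T=\exp(\sqrt{\log x})$ and using $q\le(\log x)^A$, one has $\log(qT)\ll\sqrt{\log x}$, so every non-exceptional zero contributes $|x^\rho/\rho|\ll x\exp(-c'\sqrt{\log x})$; combining with the standard zero-count $N(T,\chi)\ll T\log(qT)$ gives the claim up to the exceptional zero.

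The hard step, and the sole source of ineffectivity in $C(A)$, is the potential Siegel zero. Here I would invoke Siegel's theorem: for every $\varepsilon>0$ there is an ineffective $c_\varepsilon>0$ such that any real zero $\beta$ of $L(s,\chi)$ with $\chi$ real primitive modulo $q$ satisfies $\beta\le 1-c_\varepsilon q^{-\varepsilon}$. With $q\le(\log x)^A$ and $\varepsilon=\varepsilon(A)$ chosen small, this gives $x^\beta\ll x\exp\!\bigl(-c_\varepsilon(\log x)^{1-A\varepsilon}\bigr)\le x\exp(-C(A)\sqrt{\log x})$, absorbing the exceptional contribution. A standard reduction from imprimitive to primitive characters (the induced-character factor changes $L(s,\chi)$ by finitely many Euler factors, with no effect on zeros in the critical strip) completes the passage to arbitrary $q$. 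The obstacle throughout is precisely the ineffective Siegel input, which is what prevents making $C(A)$ explicit; everything else in the argument is effective.
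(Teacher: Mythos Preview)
Your sketch is the standard textbook proof (explicit formula plus classical zero-free region, with Siegel's theorem handling the exceptional real zero), and the ideas are correctly assembled. The paper, however, does not give any proof of this statement: Theorem~\ref{thm:SW} is stated as a black-box input with a reference to Davenport, so there is nothing in the paper to compare your argument against. Your write-up is essentially what one finds in \cite[Ch.~22]{Davenport}.

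One small point of precision: when you write ``combining with the standard zero-count $N(T,\chi)\ll T\log(qT)$ gives the claim,'' a naive multiplication by $N(T,\chi)$ with $T=\exp(\sqrt{\log x})$ would reintroduce a factor $\exp(\sqrt{\log x})$ that could swamp the saving $\exp(-c'\sqrt{\log x})$ unless $c'>1$. The correct bookkeeping uses the $1/|\rho|$ weights, so that $\sum_{|\gamma|\le T}1/|\rho|\ll(\log qT)^2\ll\log x$, which is harmless. This is a sketch-level elision rather than a gap, but worth tightening if you write it out in full.
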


\begin{theorem}[Bombieri--Vinogradov; cf.\ {\cite[Ch.~28]{Davenport}}, {\cite[Ch.~17]{IwaniecKowalski}}]\label{thm:BV}
For any $A>0$, there exists $B=B(A)>0$ such that with $Q=x^{1/2}/(\log x)^B$,
\[
\sum_{q\le Q}\max_{\gcd(a,q)=1}\max_{y\le x}|E(y;q,a)|\ll_A \frac{x}{(\log x)^A}.
\]
\end{theorem}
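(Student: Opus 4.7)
The plan is to prove Bombieri--Vinogradov by the classical route that combines the large-sieve inequality for Dirichlet characters with Vaughan's combinatorial identity for the von Mangoldt function; this is essentially the proof found in~\cite{Davenport}.

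First I would pass from $\pi(y;q,a)$ to its Chebyshev-weighted counterpart
$$\psi(y;q,a):=\sum_{\substack{n\le y\\ n\equiv a\,(\mathrm{mod}\ q)}}\Lambda(n)$$
by partial summation, absorbing the $O(\sqrt{y}\log y)$ prime-power contribution into the error term. Orthogonality of Dirichlet characters then yields
$$\psi(y;q,a)-\frac{y}{\varphi(q)}=\frac{1}{\varphi(q)}\sum_{\chi\neq\chi_0}\overline{\chi}(a)\,\psi(y,\chi)+O(\log^2 qy),$$
where $\psi(y,\chi)=\sum_{n\le y}\Lambda(n)\chi(n)$. Writing each non-principal character modulo $q$ as induced from a primitive character $\chi^*$ of conductor $d\mid q$ reduces the outer sum over $q\le Q$ to a sum over primitive characters of conductor at most $Q$, at the cost of harmless divisor-function losses.

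The central step is to estimate $\max_{y\le x}|\psi(y,\chi)|$ on average over primitive $\chi$. Vaughan's identity with parameters $U=V=x^{1/3}$ expresses $\Lambda$ as a linear combination of four convolutions, so $\psi(y,\chi)$ becomes a controlled number of bilinear forms $\sum_{m\sim M}\sum_{n\sim N}\alpha_m\beta_n\,\chi(mn)$ plus a short Type-I piece handled directly by theorem~\ref{thm:SW}. To each bilinear form I would apply Cauchy--Schwarz in one variable and then the Montgomery--Vaughan large-sieve inequality for primitive characters,
$$\sum_{q\le Q}\frac{q}{\varphi(q)}\sum_{\chi}^{*}\Bigl|\sum_{m\le M}\alpha_m\chi(m)\Bigr|^2\ \ll\ (M+Q^2)\sum_{m}|\alpha_m|^2,$$
where $\sum_{\chi}^{*}$ runs over primitive characters modulo $q$. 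Balancing the bilinear contributions and choosing $Q=x^{1/2}/(\log x)^B$ gives an intermediate bound of shape $x(\log x)^{O(1)}$, which is not yet sharp.

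The finishing step inserts the Siegel--Walfisz input to gain the extra logarithms. For $q\le(\log x)^A$, theorem~\ref{thm:SW} already delivers $|E(y;q,a)|\ll x\exp(-C\sqrt{\log x})$, whose sum over such $q$ is trivially $\ll x/(\log x)^A$. For $(\log x)^A<q\le Q$, partial summation from $\psi$ back to $E$ combined with the large-sieve estimate above, and a sufficiently large choice of $B=B(A)$, yields the claimed uniform bound $x/(\log x)^A$. The hard part is the potential Siegel zero for a real primitive character of small modulus: the Siegel--Walfisz input rests on Siegel's ineffective theorem, so the constant $B(A)$ cannot be made explicit. A secondary technical subtlety is the calibration of the Vaughan ranges $M,N$, which must be balanced so that the dispersion factor $(M+Q^2)$ from the large sieve never dominates the diagonal contribution; the choice $U=V=x^{1/3}$ is precisely what pins the level of distribution to $Q=x^{1/2}/(\log x)^B$ rather than to some smaller power of $x$.
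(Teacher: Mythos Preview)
The paper does not prove Theorem~\ref{thm:BV}; it is listed among the standard inputs in Section~\ref{sec:prelim} with citations to~\cite{Bombieri, Davenport} and is invoked as a black box throughout. Your sketch correctly outlines the classical large-sieve/Vaughan argument as presented in Davenport, so there is nothing substantive to compare: you have supplied what the paper deliberately omits.

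One small point of precision worth tightening if you ever write this out in full: in the standard argument the dichotomy in the final step is organized by the \emph{conductor} of the primitive character rather than by the modulus $q$ itself. Characters of conductor at most $(\log x)^{A'}$ are handled pointwise by the Siegel--Walfisz bound on $\psi(y,\chi)$, and the large-sieve inequality then controls the contribution of large-conductor characters on average. Your last paragraph phrases this as a split on $q$, which slightly conflates modulus and conductor; the distinction matters because a large modulus $q$ can still carry characters induced from small conductors, and those are precisely the ones for which the bilinear estimate alone does not save enough logarithms. This is a presentational issue rather than a gap in the strategy.
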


The large sieve inequality of Montgomery and Vaughan~\cite{MontgomeryVaughan73} underlies the proof of Theorem~\ref{thm:BV} and enters implicitly in every estimate below involving averages over moduli.

We also need two elementary sieve estimates. The next two lemmas will be used to control large prime factors, while the Bombieri--Vinogradov theorem above will handle medium-sized moduli on average.

\begin{lemma}\label{lem:large-squares}
For $Q\ge2$, we have $\#\{n\le x: \exists\, p>Q,\, p^2\mid n\}\le x/Q$.
\end{lemma}
\begin{proof}
$\sum_{p>Q}\lfloor x/p^2\rfloor\le x\sum_{m>Q} m^{-2}\le x/Q$.
\end{proof}

\begin{lemma}\label{lem:two-large}
For $Q=x^{1/2}/(\log x)^B$ with fixed $B>0$,
\[
\#\{n\le x: \exists\, p_1\neq p_2>Q,\, p_1p_2\mid n\}\ll_B x\Bigl(\frac{\log\log x}{\log x}\Bigr)^2.
\]
\end{lemma}
\begin{proof}
Bound the count by $(x/2)\bigl(\sum_{Q<p\le\sqrt{x}} p^{-1}\bigr)^2$. By Mertens' estimate (\cite[Th.~2.7]{MV}; cf.\ \cite[\S I.1.4]{Tenenbaum}), the inner sum is $O(\log\log x/\log x)$.
\end{proof}

\section{The Companion--Sieve framework}\label{sec:framework}

The central difficulty is that $a_{n-1}=\prod_{k<n}(1+b_k)$ depends on every past increment, including those at composite indices whose values are determined by the full history. We bypass this by isolating a deterministic sub-product of $a_{n-1}$ that suffices for the analysis.

\subsection{Prime steps and the guaranteed stock}

The following lemma identifies the behavior at prime indices.

\begin{lemma}\label{lem:prime}
We have $b_2=2$ and $b_3=1$. For every prime $q\ge5$, $\gcd(q,a_{q-1})=1$, so $b_q=q$.
\end{lemma}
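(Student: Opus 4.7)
The plan is to combine the product representation $a_{n-1}=\prod_{k=2}^{n-1}(1+b_k)$ (which is stated as a known fact in section~\ref{sec:ruiz_comparison}) with the trivial bound $b_k\le k$ from lemma~\ref{lem:basic}, and to use the parity lemma to rule out the single genuinely dangerous index. The cases $n=2$ and $n=3$ are handled by direct computation: $a_2=1+\lcm(2,1)=3$, so $b_2=2/\gcd(2,1)=2$; and $a_3=3+\lcm(3,3)=6$, so $b_3=3/\gcd(3,3)=1$.

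For the main claim, fix a prime $q\ge 5$. I would rewrite $a_{q-1}=\prod_{k=2}^{q-1}(1+b_k)$ and observe that $q\mid a_{q-1}$ forces $q\mid (1+b_k)$ for some $k\le q-1$. Using $1\le b_k\le k\le q-1$, the factor $1+b_k$ lies in $[2,q]$, so the only way it can be divisible by $q$ is if $1+b_k=q$, i.e.\ $b_k=q-1$. Since $b_k$ divides $k$ and $q-1$ has only $k=q-1$ as a multiple in $\{2,\dots,q-1\}$, this pins the entire problem down to the single value $b_{q-1}$.

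The final step is to show $b_{q-1}\neq q-1$. Here I would invoke lemma~\ref{lem:basic}: since $q\ge 5$, we have $q-2\ge 3$, so $a_{q-2}$ is even; simultaneously, $q-1$ is even (as $q$ is an odd prime). Therefore $2\mid \gcd(q-1,a_{q-2})$, whence
\[
b_{q-1}=\frac{q-1}{\gcd(q-1,a_{q-2})}\le \frac{q-1}{2}<q-1.
\]
This contradicts $b_{q-1}=q-1$, so $q\nmid a_{q-1}$, i.e.\ $\gcd(q,a_{q-1})=1$, and consequently $b_q=q/\gcd(q,a_{q-1})=q$.

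I do not expect any serious obstacle: the proof is elementary and the whole argument hinges on the fact that $q=5$ is exactly the threshold where the parity lemma becomes available (for $q=3$ we would have $q-2=1$, and indeed $b_3=1$ rather than $3$, consistent with the case split). The only subtlety worth flagging is the appeal to $b_k\mid k$, which is part of the folklore of this sequence and is also recorded in Ruiz-Cabello's paper and in section~\ref{sec:ruiz_comparison} above.
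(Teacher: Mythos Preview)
Your proof is correct and follows essentially the same route as the paper's: both argue by contradiction via the product formula $a_{q-1}=\prod_{k=2}^{q-1}(1+b_k)$, use the bound $b_k\le k$ to force $1+b_k=q$ and hence $k=q-1$, and then invoke the parity of $q-1$ and of $a_{q-2}$ (lemma~\ref{lem:basic}) to rule out $b_{q-1}=q-1$. The only cosmetic difference is that you cite $b_k\mid k$ where the paper uses the weaker inequality $b_k\le k$; either suffices.
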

\begin{proof}
Suppose $q\ge5$ is prime and $q\mid a_{q-1}=\prod_{k=2}^{q-1}(1+b_k)$. Since $q$ is prime and divides this product, it must divide at least one factor: $q\mid(1+b_k)$ for some $k<q$. As $1\le b_k\le k<q$, the only possibility is $b_k=q-1$, which requires $k=q-1$ (since $b_k\mid k$). Now $b_{q-1}=q-1$ means $\gcd(q-1,a_{q-2})=1$. But $q-1$ is even and $a_{q-2}$ is even for $q\ge5$ (Lemma~\ref{lem:basic}), a contradiction. Hence $\gcd(q,a_{q-1})=1$.
\end{proof}

At every prime step $q\ne3$, the factor $(1+b_q)=q+1$ enters $a_q$ regardless of what happens at composite indices. We collect these contributions into a deterministic factor reservoir.

\begin{definition}\label{def:stock}
$C_n:=\prod_{\substack{q\le n,\, q\in\PP\\ q\ne3}}(q+1)$, with $C_1:=1$.
\end{definition}

\begin{proposition}\label{prop:stock-divides}
For every $n\ge2$, $C_{n-1}\mid a_{n-1}$.
\end{proposition}
\begin{proof}
By Lemma~\ref{lem:prime}, for each prime $q\le n-1$ with $q\ne3$, we have $(1+b_q)=q+1$. The stock $C_{n-1}=\prod_{\substack{q\le n-1,\,q\in\PP\\q\ne3}}(1+b_q)$ is therefore a sub-product of $a_{n-1}=\prod_{k=2}^{n-1}(1+b_k)$. The omitted factors $(1+b_k)$ for composite $k$ and for $k=3$ are all positive integers, so $C_{n-1}\mid a_{n-1}$.
\end{proof}

\subsection{Companions and the divisibility chain}

The idea is to replace $b_n$ by successively easier upper bounds. The first retains only the guaranteed prime-step factors, the next tracks only whether each prime $p$ was hit at least once. Each relaxation loses information but gains tractability.

Since $C_{n-1}\mid a_{n-1}$, we have $\gcd(n,C_{n-1})\mid\gcd(n,a_{n-1})$, hence $b_n$ divides the following relaxation.

\begin{definition}
$B^{(P)}_n := n/\gcd(n,C_{n-1})$.
\end{definition}

The product companion $B^{(P)}_n$ is still hard to analyze because $\vp(C_{n-1})$ involves higher powers $p^\nu\mid(q+1)$, not just first hits. We therefore pass to a further relaxation that forgets multiplicities and counts only whether each prime $p$ divides some $(q+1)$.

\begin{definition}\label{def:1hit}
For a prime $p$ and $m\ge1$, the 1-hit stock is
\[
S^{(1)}_p(m):=\#\{q\le m: q\in\PP,\, q\ne3,\, q\equiv -1\pmod p\}.
\]
The 1-hit deficit is $\delta^{(1)}_p(n):=\max\{0,\,\vp(n)-S^{(1)}_p(n-1)\}$, and the 1-hit companion is $B^{(1)}_n:=\prod_p p^{\delta^{(1)}_p(n)}$.
\end{definition}

As an illustration, consider $n=50=2\cdot 5^2$. For $p=5$: the primes $q\le 49$ with $q\equiv -1\pmod{5}$ (excluding $q=3$) are $19, 29$, so $S_5^{(1)}(49)=2=\vp(50)$ and $\delta_5^{(1)}(50)=0$. For $p=2$: there are many odd primes $q\le 49$ with $q\equiv 1\pmod{2}$, so the supply far exceeds $v_2(50)=1$. Thus $B_{50}^{(1)}=1$, and the deficit criterion (below) gives $b_{50}\in\{1\}\cup\PP$.

The three quantities are related by a divisibility chain.

\begin{lemma}\label{lem:chain}
For all $n\ge2$, $b_n\mid B^{(P)}_n\mid B^{(1)}_n$.
\end{lemma}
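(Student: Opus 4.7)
The plan is to verify the two divisibilities separately: the first follows directly from Proposition~\ref{prop:stock_division}, and the second is a prime-by-prime comparison of $p$-adic valuations.

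For $b_n\mid B^{(P)}_n$: Proposition~\ref{prop:stock_division} gives $C_{n-1}\mid a_{n-1}$, so any common divisor of $n$ and $C_{n-1}$ is also a common divisor of $n$ and $a_{n-1}$; hence $\gcd(n,C_{n-1})\mid\gcd(n,a_{n-1})$. Since both of these divide $n$, the reciprocal divisibility transfers to the cofactors, yielding $n/\gcd(n,a_{n-1})\mid n/\gcd(n,C_{n-1})$, which is exactly $b_n\mid B^{(P)}_n$.

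For $B^{(P)}_n\mid B^{(1)}_n$: I would fix a prime $p$ and compare $p$-adic valuations. From the gcd identity,
\[
\vp(B^{(P)}_n)=\vp(n)-\min\{\vp(n),\vp(C_{n-1})\}=\max\{0,\vp(n)-\vp(C_{n-1})\},
\]
while by definition $\vp(B^{(1)}_n)=\delta^{(1)}_p(n)=\max\{0,\vp(n)-S^{(1)}_p(n-1)\}$. It therefore suffices to prove
\[
\vp(C_{n-1})\ \ge\ S^{(1)}_p(n-1),
\]
which is immediate: each of the $S^{(1)}_p(n-1)$ primes $q\le n-1$ with $q\ne 3$ and $q\equiv -1\pmod p$ contributes a factor $q+1$ to the product $C_{n-1}$ with $\vp(q+1)\ge 1$, and multiplying these contributions supplies at least $S^{(1)}_p(n-1)$ factors of $p$.

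There is no real obstacle to overcome here: the lemma is a bookkeeping step that formalizes the passage from the full recursion to the deterministic majorant. The one point worth flagging is that the final inequality $\vp(C_{n-1})\ge S^{(1)}_p(n-1)$ is typically strict---primes $q$ with $q\equiv -1\pmod{p^k}$ for $k\ge 2$ contribute $\vp(q+1)\ge k$, and extra cancellations can appear when $p=2$---so $B^{(1)}$ discards useful information. Ignoring this slack is precisely what earns $B^{(1)}$ the name \emph{one-hit} companion; tighter multi-hit variants that track the higher-order contributions would refine the chain but are not needed for the density-$1$ argument.
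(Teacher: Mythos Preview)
Your proof is correct and follows essentially the same route as the paper's own argument: both parts use Proposition~\ref{prop:stock_division} for the first divisibility and a prime-by-prime valuation comparison together with the inequality $\vp(C_{n-1})\ge S^{(1)}_p(n-1)$ for the second. Your closing remark on the slack in the one-hit relaxation is accurate and aligns with the paper's later Remark~\ref{rem:why-companion}.
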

\begin{proof}
The first divisibility was noted above. For the second, each prime $q\le n-1$ with $q\equiv -1\pmod{p}$ contributes at least one factor of $p$ to $(q+1)$, hence to $C_{n-1}$. Thus $\vp(C_{n-1})\ge S^{(1)}_p(n-1)$, giving
\[
\vp(B^{(P)}_n)=\vp(n)-\vp(\gcd(n,C_{n-1}))\le\max\{0,\,\vp(n)-S^{(1)}_p(n-1)\}=\vp(B^{(1)}_n)
\]
for every prime $p$.
\end{proof}

\subsection{The deficit criterion}

The companion reduces the conjecture to a counting problem.

\begin{proposition}\label{prop:deficit}
If $\sum_p\delta^{(1)}_p(n)\le1$, then $b_n\in\{1\}\cup\PP$.
\end{proposition}
\begin{proof}
Total deficit $0$ gives $B^{(1)}_n=1$. Total deficit $1$ gives $B^{(1)}_n=p$ for a single prime $p$. In both cases $b_n\mid B^{(1)}_n$ forces $b_n\in\{1\}\cup\PP$.
\end{proof}

\begin{remark}\label{rem:why-companion}
A direct $p$-adic analysis of $a_{n-1}$ would require controlling $\vp(\prod_{k<n}(1+b_k))$ uniformly in $n$, including sporadic higher-power contributions $p^\nu\mid(q+1)$. The passage to $B^{(1)}_n$ trades the exact $p$-adic content for a deterministic, monotone majorant. The deficit $\delta^{(1)}_p(n)$ depends on $\picount(n-1;p,-1)$, a function to which Bombieri--Vinogradov (in its $\max_{y\le x}$ form) applies directly.
\end{remark}

\section{Proof of the density theorem}\label{sec:density}

\begin{theorem}\label{thm:density1}
The set $\{n\ge2: b_n\in\{1\}\cup\PP\}$ has asymptotic density $1$.
\end{theorem}

\begin{proof}
Fix $A=10$ in Theorem~\ref{thm:BV} and let $B=B(10)$ be the corresponding constant. We restrict to $n\in[x^{1/2},x]$, since the complementary range has size $o(x)$. Define
\[
P_{\rm small}=(\log x)^{2B+10},\qquad Q=\frac{x^{1/2}}{(\log x)^B},\qquad\varepsilon(x)=\frac{1}{(\log x)^5}.
\]
The proof proceeds by showing that for all but $o(x)$ integers $n\le x$, the total deficit $\sum_p\delta^{(1)}_p(n)$ is at most~$1$, after which the result follows from the deficit criterion (Proposition~\ref{prop:deficit}). We partition the primes into three ranges.

\medskip
\noindent\textbf{Small primes} ($p\le P_{\rm small}$). For such $p$, applying Siegel--Walfisz (Theorem~\ref{thm:SW}) with fixed exponent $A'=2B+10$ gives
\[
S^{(1)}_p(n-1)=\picount(n-1;p,p-1)+O(1)=\frac{\Li(n-1)}{p-1}+O\bigl(n\,e^{-c\sqrt{\log n}}\bigr),
\]
where the $O(1)$ accounts for the possible exclusion of $q=3$ from the count. In particular,
\[
S^{(1)}_p(n-1)\gg\frac{x^{1/2}}{(\log x)^{2B+11}},
\]
while the demand is $\vp(n)\le\log n/\log 2<2\log x$. Supply exceeds demand for all such $p$ and all $n\in[x^{1/2},x]$, so $\delta^{(1)}_p(n)=0$.

\medskip
\noindent\textbf{Medium primes} ($P_{\rm small}<p\le Q$). Individual control is unavailable, but Bombieri--Vinogradov controls the average.

We first isolate the primes for which Bombieri--Vinogradov fails to give individual control.

\begin{lemma}\label{lem:bad}
Call $p\in(P_{\rm small},Q]$ \emph{bad} if $\max_{y\le x}|E(y;p,-1)|>\varepsilon(x)\Li(x)/(p-1)$. Let $\mathcal{P}_{\rm bad}$ be the set of bad primes and $\mathcal{E}_{\rm bad}(x)$ the set of $n\le x$ divisible by some bad prime. Then $\sum_{p\in\mathcal{P}_{\rm bad}} p^{-1}\ll(\log x)^{-4}$ and $|\mathcal{E}_{\rm bad}(x)|=o(x)$.
\end{lemma}
\begin{proof}
By Theorem~\ref{thm:BV},
$\sum_{p\le Q}\max_{y\le x}|E(y;p,-1)|\ll x(\log x)^{-10}$.
Summing the threshold $\varepsilon(x)\Li(x)/(p-1)\asymp x\bigl((p-1)(\log x)^6\bigr)^{-1}$ over bad primes and comparing gives $\sum_{p\in\mathcal{P}_{\rm bad}} p^{-1}\ll(\log x)^{-4}$. Then $|\mathcal{E}_{\rm bad}(x)|\le x\sum_{p\in\mathcal{P}_{\rm bad}}p^{-1}=o(x)$.
\end{proof}

Among integers divisible only by good moduli, deficits are confined to a thin range. A good medium prime can only produce a deficit if $n$ is unusually small relative to $x$.

\begin{lemma}\label{lem:good}
Let $\mathcal{E}_{\rm good}(x)$ be the set of $n\in[x^{1/2},x]\setminus\mathcal{E}_{\rm bad}(x)$ having a deficit from some good prime $p\in(P_{\rm small},Q]$ with $p\mid n$. Then $|\mathcal{E}_{\rm good}(x)|\ll x(\log x)^{-5}$.
\end{lemma}
\begin{proof}
We show that a deficit at a good medium prime forces $n$ into a very short initial interval.
If $n\in\mathcal{E}_{\rm good}(x)$ and $p$ is the offending good prime with $k=\vp(n)$, then $S^{(1)}_p(n-1)<k$ and the good-prime bound gives
\[
\Li(n-1)<k(p-1)+\varepsilon(x)\Li(x)+O(p),
\]
where the $O(p)$ absorbs the exclusion of $q=3$ after multiplication by $p-1$. Since $p\le Q=x^{1/2}/(\log x)^B$, the term $O(p)$ is $O(Q)=o(x(\log x)^{-6})$ and is absorbed in the same way as the first term.
We bound each term on the right. Since $k=\vp(n)\le\log n/\log 2<2\log x$ and $p\le Q=x^{1/2}/(\log x)^B$, the first term satisfies
\[
k(p-1)\le (2\log x)\cdot Q=\frac{2x^{1/2}}{(\log x)^{B-1}}.
\]
The second term satisfies $\varepsilon(x)\Li(x)\ll x(\log x)^{-6}$. For $x$ large enough, $2x^{1/2}(\log x)^{1-B}\le x(\log x)^{-6}$ (since $x^{1/2}=o(x)$), so
\[
\Li(n-1)\ll x(\log x)^{-6}.
\]
Since $\Li(t)\ge t/(2\log t)$ for $t\ge 4$, this gives $n/(2\log n)\ll x(\log x)^{-6}$, hence $n\ll x(\log x)^{-5}$. All elements of $\mathcal{E}_{\rm good}(x)$ lie in the interval $[x^{1/2},\, x(\log x)^{-5}]$, which has length $o(x)$.
\end{proof}

Write $\mathcal{E}_{\rm med}(x)=\mathcal{E}_{\rm bad}(x)\cup\mathcal{E}_{\rm good}(x)$. For $n\notin\mathcal{E}_{\rm med}(x)$, every medium prime divisor has deficit $0$.

\medskip
\noindent\textbf{Large primes} ($p>Q$). By Lemma~\ref{lem:large-squares}, the set of $n\le x$ with $p^2\mid n$ for some $p>Q$ has size $O(x/Q)=o(x)$. By Lemma~\ref{lem:two-large}, the set of $n$ divisible by two distinct primes exceeding $Q$ also has size $o(x)$. Outside these sets, $n$ has at most one prime factor above $Q$, appearing to the first power, contributing deficit at most $1$.

\medskip
\noindent\textbf{Assembly.} Let $\mathcal{E}(x)=\{n<x^{1/2}\}\cup\mathcal{E}_{\rm med}(x)\cup\mathcal{E}_{\rm large}(x)$, where $\mathcal{E}_{\rm large}(x)$ collects the two large-prime exceptional sets. Then $|\mathcal{E}(x)|=o(x)$, and for $n\in[1,x]\setminus\mathcal{E}(x)$,
\[
\sum_p\delta^{(1)}_p(n)=\underbrace{\sum_{p\le P_{\rm small}}\delta^{(1)}_p(n)}_{=\,0}
+\underbrace{\sum_{P_{\rm small}<p\le Q}\delta^{(1)}_p(n)}_{=\,0}
+\underbrace{\sum_{p>Q}\delta^{(1)}_p(n)}_{\le\,1}\;\le\;1.
\]
The deficit criterion (Proposition~\ref{prop:deficit}) gives $b_n\in\{1\}\cup\PP$ for all such $n$.
\end{proof}

\section{Effective finite reduction}\label{sec:checkpoints}

The density result leaves open the behavior at any specific $n$. For each small prime $p$, we identify a threshold beyond which the stock is permanently abundant.

\begin{definition}\label{def:checkpoint}
For a prime $p$, let $t_0(p)$ be the least $t\ge1$ such that $S^{(1)}_p(p^k-1)\ge k$ for all $k\ge t$. For a threshold $P_0\ge2$, define $N^*(P_0):=\max_{p\le P_0} p^{t_0(p)}$.
\end{definition}

\begin{lemma}\label{lem:t0-exists}
For each prime $p$, the value $t_0(p)$ exists and is effectively computable.
\end{lemma}
\begin{proof}
By the prime number theorem in progressions, $S^{(1)}_p(p^k-1)\sim p^k/\bigl((p-1)k\log p\bigr)$. The ratio to $k$ tends to infinity, so the inequality holds for all large $k$. Effectiveness follows from explicit error terms for fixed modulus $p$ (\cite[Ch.~20]{Davenport}).
\end{proof}

\begin{proposition}\label{prop:Nstar}
For every $n\ge N^*(P_0)$ and every prime $p\le P_0$, $\delta^{(1)}_p(n)=0$.
\end{proposition}
\begin{proof}
Let $k=\vp(n)$. If $k\le t_0(p)$: since $n\ge p^{t_0(p)}$, we have $S^{(1)}_p(n-1)\ge S^{(1)}_p(p^{t_0(p)}-1)\ge t_0(p)\ge k$. If $k>t_0(p)$: since $p^k\mid n$, we have $n\ge p^k$ and $S^{(1)}_p(n-1)\ge S^{(1)}_p(p^k-1)\ge k$ by stability.
\end{proof}

\begin{remark}\label{rem:computation}
For small primes, direct numerical computation of the counting function $S^{(1)}_p(p^k-1)$ in a finite initial range of $k$ gives the candidate values
\[
t_0(2)=4,\quad t_0(3)=1,\quad t_0(5)=3,\quad t_0(7)=2,\quad t_0(11)=2,\quad t_0(13)=3.
\]
A fully rigorous determination of each $t_0(p)$ combines this direct computation with an explicit bound on the counting function of primes in the progression $-1\bmod p$, whose existence is guaranteed by Lemma~\ref{lem:t0-exists} but whose constants we do not optimize here. The finite reduction corollary below depends only on the effective existence of $t_0(p)$, not on any specific numerical value.
\end{remark}

\begin{corollary}\label{cor:finite-reduction}
For every $P_0\ge2$ and every $n\ge N^*(P_0)$, if $b_n\notin\{1\}\cup\PP$, then every prime factor of $b_n$ exceeds $P_0$.
\end{corollary}
\begin{proof}
By Lemma~\ref{lem:chain}, $b_n\mid B_n^{(1)}=\prod_p p^{\delta_p^{(1)}(n)}$. By Proposition~\ref{prop:Nstar}, $\delta_p^{(1)}(n)=0$ for all $p\le P_0$, so every prime factor of $B_n^{(1)}$, and hence of $b_n$, exceeds $P_0$.
\end{proof}

Combining the finite reduction with the density theorem gives a precise picture of the residual obstruction.

\begin{proposition}\label{prop:thin}
Fix $P_0\ge2$ and let $N^*=N^*(P_0)$. For large $x$, the set of integers $n\in[N^*,x]$ with $\sum_p\delta^{(1)}_p(n)>1$ has size $o(x)$. This exceptional set is contained in $\{n<x^{1/2}\}$ together with integers having either a squared large prime factor ($p>Q$, $p^2\mid n$), two distinct large prime factors, or membership in $\mathcal{E}_{\rm med}(x)$.
\end{proposition}
\begin{proof}
By Proposition~\ref{prop:Nstar}, primes $p\le P_0$ contribute zero deficit for $n\ge N^*$. For primes $P_0<p\le P_{\rm small}$, the small-prime argument of Theorem~\ref{thm:density1} shows that $\delta^{(1)}_p(n)=0$ for all $n\in[x^{1/2},x]$. The analysis of Section~\ref{sec:density} covers the remaining primes: medium primes produce deficits only on $\mathcal{E}_{\rm med}(x)=o(x)$, and large primes contribute total deficit exceeding $1$ only on $\mathcal{E}_{\rm large}(x)=o(x)$.
\end{proof}

For any fixed $P_0$, once $n\ge N^*(P_0)$, a counterexample to Conjecture~\ref{conj:main} would require an integer whose large prime factors (all exceeding $P_0$) form a configuration that is provably of density $0$.

\section{A variant encoding twin primes}\label{sec:twin}

The preceding sections established a density-$1$ theorem for the original LCM recurrence. We now turn to the $K=2$ variant, which exhibits a different and richer arithmetic structure linked to twin primes.

Consider the $K=2$ recurrence defined by $x_1=1$,
\[
x_n=2x_{n-1}+\lcm(n,x_{n-1}),\qquad c_n=\frac{n}{\gcd(n,x_{n-1})}.
\]
Equivalently, $x_n=x_{n-1}(2+c_n)$. The residue classes relevant to the companion-sieve framework become $q\equiv -2\pmod{p}$, but the prime-step mechanism is no longer automatic (for instance $c_5=1$), so the density-$1$ argument becomes conditional. We first analyze the structural interplay between this recurrence and twin primes, and then return to a conditional companion-sieve theorem in \S\ref{subsec:k2density}. The sequence $(c_n)$ is \href{https://oeis.org/A135508}{OEIS A135508} (see also Schepke~\cite{Schepke}):
\[
2, 3, 1, 1, 1, 7, 2, 1, 1, 11, 1, 1, 7, 1, 1, 17, 1, 1, 1, 7, 11, 23, 1, 1,\ldots
\]

\subsection{Inhibition and certification}

The first structural property is an inhibition mechanism at twin primes.

\begin{lemma}\label{lem:inhibition}
Let $(p,p+2)$ be twin primes. If $c_p=p$, then $c_{p+2}=1$.
\end{lemma}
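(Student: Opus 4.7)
The plan is to unpack the hypothesis $c_p=p$ algebraically, which forces $q=p+2$ to appear as a factor of $x_p$, and then to propagate this divisibility one more step to $x_{q-1}=x_{p+1}$.

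First, I would use the definition $c_p=p/\gcd(p,x_{p-1})$ together with $c_p=p$ to conclude $\gcd(p,x_{p-1})=1$, so that $\lcm(p,x_{p-1})=p\,x_{p-1}$. Substituting into the recurrence at index $n=p$ then gives the crucial identity
\[
x_p \;=\; 2x_{p-1} + p\,x_{p-1} \;=\; (p+2)\,x_{p-1} \;=\; q\,x_{p-1}.
\]
In other words, the $K=2$ constant in the recurrence combines with the prime-step multiplier $p$ to produce exactly the twin prime $q$ as a divisor of $x_p$.

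Second, I would show that this divisibility persists for the single additional step needed. Writing $x_{p+1}=2x_p+\lcm(p+1,x_p)$, both summands are divisible by $q$: the first because $q\mid x_p$, and the second because $\lcm(p+1,x_p)$ is a multiple of $x_p$. Hence $q\mid x_{p+1}=x_{q-1}$, which forces $\gcd(q,x_{q-1})=q$ and therefore $c_q=q/q=1$, as required.

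The argument is entirely elementary, and there is no serious obstacle; the statement is essentially a one-line consequence of the recurrence. The only point worth flagging is the algebraic coincidence that drives the phenomenon: the coefficient $2$ in $x_n=2x_{n-1}+\lcm(n,x_{n-1})$ and the multiplier $p$ at a prime step conspire to yield exactly $p+2$, which explains both why this variant singles out twin primes and why its companion mechanism tracks primes $q\equiv -2\pmod p$ rather than the residue $-1$ used in the $K=1$ analysis.
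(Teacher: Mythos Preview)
Your proof is correct and follows essentially the same route as the paper: both derive $x_p=(p+2)x_{p-1}=q\,x_{p-1}$ from the hypothesis $c_p=p$, then propagate the divisibility $q\mid x_p$ forward to $x_{q-1}$ and conclude $c_q=1$. The only cosmetic difference is that the paper uses the factored form $x_n=x_{n-1}(2+c_n)$ and invokes ``multiplicativity'' for $x_p\mid x_{q-1}$, whereas you unpack the recurrence at $n=p$ and $n=p+1$ directly; your version is arguably more self-contained.
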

\begin{proof}
If $c_p=p$, then $x_p=x_{p-1}(2+p)=(p+2)x_{p-1}$, so $(p+2)\mid x_p$. Since $x_p\mid x_{p+1}$, we have $(p+2)\mid x_{p+1}$, giving $c_{p+2}=(p+2)/\gcd(p+2,x_{p+1})=1$.
\end{proof}

The converse direction shows that forced-$1$ increments at primes certify twin pairs.

\begin{theorem}\label{thm:twin}
Assume $c_n\in\{1\}\cup\PP$ for all $n$ (hypothesis $\mathrm{C}_1$). If $q\ge5$ is prime and $c_q=1$, then $q-2$ is prime.
\end{theorem}
\begin{proof}
If $c_q=1$, then $q\mid x_{q-1}=\prod_{k=2}^{q-1}(2+c_k)$. Since $q$ is prime, $q\mid(2+c_k)$ for some $k<q$. As $c_k\le k<q$, this forces $c_k=q-2$. Now $c_k\mid k$ and $k<q$, so $k$ is a multiple of $q-2$ with $k\le q-1$. Since $q\ge 5$ gives $q-2\ge 3$, the only such multiple is $k=q-2$ itself (the next multiple being $2(q-2)=2q-4\ge q+1>q-1$). Under $\mathrm{C}_1$, $c_{q-2}=q-2$ implies that $q-2$ is prime.
\end{proof}

Together, these two results create a two-way link. The inhibition lemma converts twin pairs into forced-$1$ increments, while the certification theorem recovers twin pairs from forced-$1$ increments. In particular, under $\mathrm{C}_1$, proving that $c_q=1$ for infinitely many primes $q$ would establish the Twin Prime Conjecture.

\subsection{Automatic inhibition beyond $(5,7)$}

A combination of the inhibition lemma and the certification theorem shows that the mechanism fires automatically for every twin prime pair except $(5,7)$.

\begin{proposition}\label{prop:auto-inhibition}
Assume $\mathrm{C}_1$. Let $(p,p+2)$ be a twin prime pair with $p>5$. Then $c_p=p$ and $c_{p+2}=1$.
\end{proposition}
\begin{proof}
Since $p$ is prime and $c_p\mid p$, we have $c_p\in\{1,p\}$. Suppose $c_p=1$. By Theorem~\ref{thm:twin}, $p-2$ is prime. But every twin prime $p>3$ satisfies $p\equiv 5\pmod{6}$ (since among $p, p+1, p+2$, one is divisible by $3$, and neither $p$ nor $p+2$ can be, so $3\mid p+1$). Hence $p-2\equiv 3\pmod{6}$, so $3\mid(p-2)$. As $p>5$ forces $p-2>3$, the integer $p-2$ is composite, contradicting $p-2\in\PP$. Therefore $c_p=p$, and the inhibition lemma gives $c_{p+2}=1$.
\end{proof}

\begin{corollary}\label{cor:larger-twin}
Under $\mathrm{C}_1$, the larger member $q$ of every twin pair with $q\ge 13$ satisfies $c_q=1$.
\end{corollary}

The pair $(5,7)$ is the unique exception. Here $p=5$ and $p-2=3$ is prime, so the mod~$6$ argument fails. In fact, $x_4=60$ and $5\mid 60$, giving $c_5=5/\gcd(5,60)=1\ne 5$. The inhibition lemma does not fire, and $7$ survives, appearing several times early in the sequence (for instance at $n=7,14,21,28,35,42$).

\subsection{The $2$-adic staircase}

The $2$-adic structure of the sequence admits a complete description.

\begin{proposition}\label{prop:staircase}
Let $m_k=2\cdot 4^k = 2^{2k+1}$ for $k\ge0$. The following statements hold.
\begin{enumerate}[label=\textup{(\roman*)}]
\item $v_2(x_n)=2k+2$ for all $n$ with $m_k\le n\le m_{k+1}-1$.
\item $c_{m_k}=2$ for every $k\ge0$.
\item $c_n$ is odd for every $m_k<n<m_{k+1}$.
\end{enumerate}
\end{proposition}
\begin{proof}
We use $v_2(c_n)=\max\{0,\,v_2(n)-v_2(x_{n-1})\}$ and $v_2(x_n)=v_2(x_{n-1})+v_2(2+c_n)$, noting that $2+c_n$ is odd when $c_n$ is odd.

\emph{Base case} ($k=0$). We have $c_2=2$ and $x_2=4$, so $v_2(x_2)=2$. For $3\le n\le 7$, we have $v_2(n)\le 2=v_2(x_{n-1})$, hence $v_2(c_n)=0$, so $c_n$ is odd, $2+c_n$ is odd, and $v_2(x_n)=v_2(x_{n-1})=2$. At $n=8=m_1$, $v_2(8)=3>2=v_2(x_7)$, so $c_8=2^{3-2}=2$ and $v_2(x_8)=2+2=4$.

\emph{Induction step.} Assume $v_2(x_n)=2k+2$ for $m_k\le n\le m_{k+1}-1$. For $m_k<n<m_{k+1}$, we have $v_2(n)\le 2k+2=v_2(x_{n-1})$, so $c_n$ is odd and $v_2(x_n)=2k+2$. At $n=m_{k+1}=2^{2k+3}$, $v_2(n)=2k+3>2k+2=v_2(x_{n-1})$, so $c_{m_{k+1}}=2^{(2k+3)-(2k+2)}=2$ and $v_2(x_{m_{k+1}})=2k+4$.
\end{proof}

Concretely, the first three blocks are $[2,7]$, $[8,31]$, $[32,127]$, on which $v_2(x_n)$ takes the constant values $2$, $4$, $6$ respectively, with $c_n=2$ at the left endpoint of each block.

This proves the formula $c_{2\cdot 4^k}=2$ conjectured in \cite{OEIS}, entry \href{https://oeis.org/A135508}{A135508}, and settles the conjecture made there that $2$ appears infinitely often in the sequence.

The following corollary of the certification theorem proves, under $\mathrm{C}_1$, a statement observed numerically by McEachen in the OEIS comments for A135508~\cite{McEachen25}: \emph{for every prime $p$ such that $p-2$ is not prime, $c_p=p$.}

\begin{corollary}\label{cor:McEachen}
Assume $\mathrm{C}_1$. If $p\ge5$ is prime and $p-2$ is composite, then $c_p=p$.
\end{corollary}
\begin{proof}
Since $c_p\mid p$ and $p$ is prime, $c_p\in\{1,p\}$. If $c_p=1$, Theorem~\ref{thm:twin} gives $p-2\in\PP$, contradicting the hypothesis. Hence $c_p=p$.
\end{proof}

\subsection{Valuation barrier for larger twin primes}

Once a prime $\ell$ enters $x_N$ to a given order, later occurrences of $c_n=\ell$ require increasingly high powers of $\ell$ in $n$.

\begin{lemma}\label{lem:barrier}
Fix a prime $\ell$ and suppose $v_\ell(x_N)\ge r$ for some $N$ and $r\ge1$. Then for every $n\ge N+1$, if $c_n=\ell$ then $v_\ell(n)\ge r+1$.
\end{lemma}
\begin{proof}
Since $x_n=x_N\prod_{j=N+1}^{n}(2+c_j)$, the valuation $v_\ell(x_n)$ is nondecreasing in $n$, so $v_\ell(x_{n-1})\ge r$. If $c_n=\ell$, then $1=v_\ell(n)-v_\ell(\gcd(n,x_{n-1}))$ and $v_\ell(\gcd(n,x_{n-1}))\ge r$, giving $v_\ell(n)\ge r+1$.
\end{proof}

\begin{proposition}\label{prop:barrier-twin}
Assume $\mathrm{C}_1$. Let $q\ge 13$ be the larger prime of a twin pair. Then $q\mid x_n$ for all $n\ge q-2$, and any occurrence $c_m=q$ must satisfy $q^2\mid m$. In particular, $q$ does not appear in $(c_n)$ at any index $m<q^2$.
\end{proposition}
\begin{proof}
By Proposition~\ref{prop:auto-inhibition}, $c_{q-2}=q-2$, so $x_{q-2}=x_{q-3}\cdot q$, hence $q\mid x_{q-2}$. Since $x_n=x_{n-1}(2+c_n)$, each $x_n$ is divisible by $x_{n-1}$, so $q\mid x_n$ for all $n\ge q-2$. Lemma~\ref{lem:barrier} with $\ell=q$, $N=q-2$, $r=1$ gives $v_q(m)\ge 2$ for any $m$ with $c_m=q$.
\end{proof}

\begin{remark}\label{rem:larger-twin-oeis}
It was conjectured in \cite{OEIS}, entry \href{https://oeis.org/A135508}{A135508}, that the larger member of each twin pair (except $7$) never appears in $(c_n)$. Corollary~\ref{cor:larger-twin} and Proposition~\ref{prop:barrier-twin} prove this at the index $q$ itself and push any other occurrence beyond $q^2$. Numerical computation up to $n=20000$ confirms the full conjecture, but the complete exclusion remains open.
\end{remark}

\subsection{Exact twin-prime detection}

The inhibition and certification results combine to show that, under $\mathrm{C}_1$, the $K=2$ recurrence detects twin primes with a single, identified exception.

\begin{proposition}\label{prop:detection}
Assume $\mathrm{C}_1$. For every prime $q\ge5$,
\[
c_q=1\quad\Longleftrightarrow\quad q-2\text{ is prime and }q\ne 7.
\]
\end{proposition}
\begin{proof}
If $c_q=1$, Theorem~\ref{thm:twin} gives $q-2\in\PP$, so $q$ is the larger member of a twin pair. Conversely, if $q\ge 13$ is the larger member of a twin pair, Corollary~\ref{cor:larger-twin} gives $c_q=1$. The small cases are $c_3=3$, $c_5=1$ (and $5-2=3$ is prime), $c_7=7\ne 1$.
\end{proof}

Let $\pi_2(x)$ denote the number of twin prime pairs $(p,p+2)$ with $p+2\le x$. Under $\mathrm{C}_1$,
\[
\#\{q\le x: q\text{ prime},\, c_q=1\}=\pi_2(x)-1\qquad(x\ge 7),
\]
the correction $-1$ accounting for the missed pair $(5,7)$. If one further assumes the quantitative Hardy--Littlewood conjecture \cite[Conjecture~B]{HL23} $\pi_2(x)\sim 2C_2\, x/(\log x)^2$, where $C_2=\prod_{p\ge3}(1-1/(p-1)^2)\approx 0.6602$ is the twin prime constant, then forced-$1$ increments at primes occur with the same asymptotic frequency as twin primes.

Numerical verification up to $n=10000$ confirms exact agreement. The $204$ primes $q\le 10000$ with $c_q=1$ are precisely the larger members of the $205$ twin pairs in that range, minus $q=7$.

\subsection{A conditional density-$1$ theorem for the $K=2$ variant}\label{subsec:k2density}

The companion-sieve framework extends to the $K=2$ recurrence, with the residue class $-2\bmod p$ replacing $-1\bmod p$. The key difference is that the prime-step mechanism is no longer automatic. At twin primes $q$, the step $c_q=1$ fails to contribute $q+2$ to the stock. The strategy has four steps.
\begin{enumerate}[leftmargin=*,label=(\roman*)]
\item Formalize which prime steps are available (all except twin primes).
\item Build the $K=2$ stock from these steps.
\item Bound the twin-prime loss using a Selberg-type sieve estimate.
\item Show the loss is too sparse to affect the density-$1$ argument.
\end{enumerate}

The density-$1$ argument for the $K=2$ variant rests on the following prime-index detection statement, which isolates the behavior of $c_q$ at prime indices.

\begin{hypothesis}\label{hyp:detect}
For every prime $q\ge5$, $c_q\in\{1,q\}$, and
\[
c_q=1\quad\Longleftrightarrow\quad q-2\in\PP\text{ and }q\ne 7.
\]
\end{hypothesis}

This is strictly weaker than $\mathrm{C}_1$ (which asserts $c_n\in\{1\}\cup\PP$ at every index): it constrains only the prime indices. Under $\mathrm{C}_1$, it was proved as Proposition~\ref{prop:detection}.

Under this hypothesis, the guaranteed stock for the $K=2$ variant is built from those prime indices $q$ at which $c_q=q$, so that the factor $q+2$ is guaranteed to enter $x_q$. By Hypothesis~\ref{hyp:detect}, this holds for every prime $q\ge 5$ with $q-2\notin\PP$; direct computation also gives $c_3=3$. The condition $q-2\notin\PP$ automatically excludes the primes where $c_q=1$, namely $q=5$ and the larger member of each twin pair $q\ge 13$; it also excludes $q=7$, the unique exception where $c_7=7$ despite $q-2\in\PP$, with the only effect that the finite factor $9$ is dropped from the stock.

To distinguish the $K=2$ quantities from the $1$-hit quantities $S^{(1)}_p$, $\delta^{(1)}_p$, $B^{(1)}_n$ of Section~\ref{sec:framework} (where the superscript $(1)$ stands for ``$1$-hit''), we decorate the $K=2$ analogues with a tilde: $\tilde C_N$, $\tilde S_p$, $\tilde\delta_p$, $\tilde B_n$.

\begin{definition}\label{def:stock-k2}
Under Hypothesis~\ref{hyp:detect}, define
\[
\tilde C_N:=\prod_{\substack{q\le N,\, q\in\PP\\ q\ge 3,\, q-2\notin\PP}}(q+2).
\]
\end{definition}

By Hypothesis~\ref{hyp:detect} together with the direct computation $c_3=3$, every prime $q\ge 3$ with $q-2\notin\PP$ satisfies $c_q=q$, so that $x_q=x_{q-1}(q+2)$ and hence $(q+2)\mid x_q$. Therefore $\tilde C_N\mid x_N$ for every $N\ge 3$.

The companion construction carries over to the $K=2$ setting.

\begin{lemma}\label{lem:chain-k2}
Assume Hypothesis~\ref{hyp:detect}. For each odd prime $p$ and $n\ge 2$, define $\tilde S_p(N):=\#\{q\le N: q\in\PP,\, q\equiv -2\pmod p,\, q-2\notin\PP\}$ and $\tilde\delta_p(n):=\max\{0,\,\vp(n)-\tilde S_p(n-1)\}$. Then the odd part of $c_n$ divides
\[
\tilde B_n:=\prod_{p\text{ odd}} p^{\tilde\delta_p(n)}.
\]
\end{lemma}
\begin{proof}
Each prime $q$ counted by $\tilde S_p(n-1)$ contributes at least one factor of $p$ to $q+2$, hence to $\tilde C_{n-1}$. Thus $\vp(\tilde C_{n-1})\ge\tilde S_p(n-1)$. Since $\tilde C_{n-1}\mid x_{n-1}$,
\[
\vp(c_n)=\vp(n)-\vp(\gcd(n,x_{n-1}))\le\max\{0,\,\vp(n)-\tilde S_p(n-1)\}=\tilde\delta_p(n)
\]
for every odd prime $p$, hence the odd part of $c_n$ divides $\tilde B_n$.
\end{proof}

\begin{corollary}\label{cor:odd-deficit-k2}
Assume Hypothesis~\ref{hyp:detect}. If $\sum_{p\text{ odd}}\tilde\delta_p(n)\le 1$, then the odd part of $c_n$ belongs to $\{1\}\cup\PP$.
\end{corollary}
\begin{proof}
By Lemma~\ref{lem:chain-k2}, the odd part of $c_n$ divides $\tilde B_n=\prod_{p\text{ odd}} p^{\tilde\delta_p(n)}$. If the total odd deficit is $0$, then $\tilde B_n=1$. If it is $1$, then $\tilde B_n$ is a single odd prime.
\end{proof}

\begin{theorem}[Selberg-sieve upper bound for twin primes in arithmetic progressions]\label{thm:twin-selberg}
There exists an absolute constant $C_0>0$ such that, uniformly for odd primes $p\le x^{1/2}/(\log x)^{B_{\rm tw}}$ with $B_{\rm tw}>0$ fixed and $x$ sufficiently large,
\[
\Pi_2(x;p,-2):=\#\{q\le x: q\in\PP,\, q-2\in\PP,\, q\equiv -2\!\!\pmod p\}\le\frac{C_0\,x}{(p-1)(\log x)^2}.
\]
\end{theorem}

\begin{proof}
Write $q=pm-2$. Then the conditions
\[
q\le x,\qquad q\equiv -2\pmod p,\qquad q\in\PP,\qquad q-2\in\PP
\]
imply
\[
1\le m\le X:=\Bigl\lfloor\frac{x+2}{p}\Bigr\rfloor,\qquad L_1(m):=pm-2\in\PP,\qquad L_2(m):=pm-4\in\PP.
\]
Hence $\Pi_2(x;p,-2)$ is bounded by the number of integers $m\le X$ for which both linear forms $L_1(m)$ and $L_2(m)$ are prime. We apply the standard upper-bound Selberg sieve to the sequence
\[
\mathcal A:=\{L_1(m)L_2(m):1\le m\le X\}.
\]
For each prime $\ell\ne p$, let $\omega(\ell)$ denote the number of residue classes $m\pmod\ell$ for which $L_1(m)L_2(m)\equiv 0\pmod\ell$. Since $p$ is invertible modulo $\ell\ne p$, we have
\[
\omega(2)=1,\qquad \omega(\ell)=2\ \ (\ell\ne 2,p),\qquad \omega(p)=0.
\]
This is a sieve problem of dimension $2$. By the standard upper-bound sieve for a pair of linear forms (see \cite[Ch.~3]{HalberstamRichert}, \cite[Ch.~6]{IwaniecKowalski}, or \cite[Ch.~19]{FriedlanderIwaniec}), one gets
\[
\Pi_2(x;p,-2)\ll\mathfrak S_p\,\frac{X}{(\log X)^2},
\]
where
\[
\mathfrak S_p=\prod_{\ell}\Bigl(1-\frac{\omega(\ell)}{\ell}\Bigr)\Bigl(1-\frac{1}{\ell}\Bigr)^{-2}.
\]
In the present case this gives
\[
\mathfrak S_p=\Bigl(1-\tfrac12\Bigr)\Bigl(1-\tfrac12\Bigr)^{-2}\prod_{\ell\ne 2,p}\Bigl(1-\frac{2}{\ell}\Bigr)\Bigl(1-\frac{1}{\ell}\Bigr)^{-2}\ll\frac{p}{p-2},
\]
uniformly in $p\ge 3$. Indeed, compared with the usual twin-prime singular product, the only change is the omission of the local factor $(1-2/p)(1-1/p)^{-2}$ at $\ell=p$, which introduces an extra factor $(1-1/p)^{2}/(1-2/p)=(p-1)^{2}/(p(p-2))\ll p/(p-2)$. Therefore
\[
\Pi_2(x;p,-2)\ll\frac{p}{p-2}\cdot\frac{x/p}{(\log(x/p))^2}\ll\frac{x}{(p-1)(\log(x/p))^2},
\]
where the last step uses $1/(p-2)\ll 1/(p-1)$ for $p\ge 3$.
If $p\le x^{1/2}(\log x)^{-B_{\rm tw}}$, then $x/p\ge x^{1/2}(\log x)^{B_{\rm tw}}$, so $\log(x/p)\asymp\log x$ uniformly in this range, and the theorem follows.
\end{proof}

Refined variants of the Selberg sieve adapted to prime tuples are developed in the Polymath framework \cite{Polymath14}.

\begin{theorem}\label{thm:density-k2}
Assume Hypothesis~\ref{hyp:detect}. Then the set $\{n\ge2: c_n\in\{1\}\cup\PP\}$ has asymptotic density $1$.
\end{theorem}

\begin{proof}
Fix $A=10$ in Theorem~\ref{thm:BV} and let $B_{\rm BV}=B(10)$. Set $B=\max(B_{\rm BV},B_{\rm tw})$, where $B_{\rm tw}$ is the constant from Theorem~\ref{thm:twin-selberg}, so that both Bombieri--Vinogradov and the twin bound hold with the same exponent. Define
\[
P_{\rm small}=(\log x)^{2B+10},\qquad Q=\frac{x^{1/2}}{(\log x)^B},\qquad\varepsilon(x)=\frac{1}{(\log x)^5}.
\]
We restrict to $n\in[x^{1/2},x]$, since the complementary range has size $o(x)$.

The stock decomposes exactly as $\tilde S_p(N)=\picount(N;p,-2)-\Pi_2(N;p,-2)$.

\emph{Small odd primes} ($p\le P_{\rm small}$). Applying Siegel--Walfisz with exponent $A'=2B+10$ gives $\picount(N;p,-2)=\Li(N)/(p-1)+O(Ne^{-c\sqrt{\log N}})$, while Theorem~\ref{thm:twin-selberg} gives $\Pi_2(N;p,-2)\ll N/\bigl((p-1)(\log N)^2\bigr)$. Hence
\[
\tilde S_p(n-1)\ge\frac{\Li(n-1)}{p-1}-\frac{C_0 n}{(p-1)(\log n)^2}+O\!\left(\frac{n}{e^{c\sqrt{\log n}}}\right)\gg\frac{x^{1/2}}{(\log x)^{2B+11}}.
\]
The twin correction is smaller than the main term by a factor $1/\log n$, so the supply still dominates $\vp(n)\le 2\log x$.

\emph{Medium odd primes} ($P_{\rm small}<p\le Q$). Define bad moduli exactly as in Lemma~\ref{lem:bad}, with residue class $-2\bmod p$ in place of $-1\bmod p$. Bombieri--Vinogradov applied to $\picount(\cdot\,;p,-2)$ gives $\sum_{p\in\mathcal{P}_{\rm bad}}p^{-1}\ll(\log x)^{-4}$ and $|\mathcal{E}_{\rm bad}(x)|=o(x)$.

For good moduli, suppose $p$ is good and $k=\vp(n)$ with $\tilde S_p(n-1)<k$. Then
\[
\Li(n-1)<k(p-1)+\varepsilon(x)\Li(x)+\frac{C_0 n}{\log^2 n},
\]
where we used $|E(n-1;p,-2)|(p-1)\le\varepsilon(x)\Li(x)$ from the good-modulus definition and $\Pi_2(n-1;p,-2)(p-1)\le C_0 n/(\log n)^2$ from Theorem~\ref{thm:twin-selberg}. We absorb the twin correction into the left side. Since $\Li(n-1)\ge n/(2\log n)$ for large $n$ and $\log n\ge\tfrac{1}{2}\log x$ in the range $n\in[x^{1/2},x]$,
\[
\frac{C_0 n}{\log^2 n}\le\frac{2C_0}{\log n}\,\Li(n-1)\le\frac{1}{2}\,\Li(n-1)
\]
for $x$ large enough. Rearranging gives
\[
\tfrac{1}{2}\,\Li(n-1)\le k(p-1)+\varepsilon(x)\Li(x).
\]
The right side is $O(x(\log x)^{-6})$ by the same estimates as in Lemma~\ref{lem:good}, so $\Li(n-1)\ll x(\log x)^{-6}$ and $n\ll x(\log x)^{-5}$. All such $n$ lie in an interval of length $o(x)$.

\emph{Large odd primes} ($p>Q$). Lemmas~\ref{lem:large-squares} and~\ref{lem:two-large} apply unchanged: outside $o(x)$ exceptions, $n$ has at most one large odd prime factor to the first power, contributing odd deficit at most $1$.

\emph{The prime $2$.} By Proposition~\ref{prop:staircase}, whenever $c_n$ is even one has $c_n=2$. Thus the case $c_n=2\ell$ with $\ell$ an odd prime cannot occur, and the even part never creates a composite obstruction.

\emph{Assembly.} For all but $o(x)$ integers $n\le x$, the small and medium odd-prime deficits vanish by the preceding estimates, and the large odd-prime deficit is at most $1$. By Corollary~\ref{cor:odd-deficit-k2}, the odd part of $c_n$ is $1$ or an odd prime. Since, by Proposition~\ref{prop:staircase}, whenever $c_n$ is even one has $c_n=2$, it follows that $c_n\in\{1\}\cup\PP$.
\end{proof}

Thus, for $K=2$, a prime-index description together with a sparse twin-loss bound suffices to recover the same density-$1$ conclusion as in the unconditional $K=1$ case.

\begin{remark}\label{rem:twin-finite}
Under Hypothesis~\ref{hyp:detect}, if there are only finitely many twin primes, then $c_q=q$ for all sufficiently large primes $q$, the stock becomes complete beyond some threshold, and the companion-sieve operates exactly as in the $K=1$ case without any twin correction. Conversely, if there are infinitely many twins, the stock has infinitely many gaps, but Theorem~\ref{thm:twin-selberg} ensures these gaps are too sparse to obstruct a density-$1$ argument. In either case, the companion-sieve yields the same density-$1$ conclusion.
\end{remark}

\section{Beyond density $1$}\label{sec:conclusion}

\subsection{Summary of results}

We have proved unconditionally that the LCM prime-increment conjecture holds on a set of density~$1$ (Theorem~\ref{thm:density1}), with an effective finite reduction eliminating all small prime obstructions (Proposition~\ref{prop:Nstar}). For the $K=2$ variant, the sequence $(c_n)$ detects twin primes (Proposition~\ref{prop:detection}), exhibits a $2$-adic staircase (Proposition~\ref{prop:staircase}), constrains the reappearance of larger twin primes via a valuation barrier (Proposition~\ref{prop:barrier-twin}), and admits a density-$1$ theorem conditional on a prime-index detection hypothesis, with the twin-prime loss controlled unconditionally by the upper-bound sieve estimate of Theorem~\ref{thm:twin-selberg}; this yields the conditional density-$1$ result of Theorem~\ref{thm:density-k2}.

The residual obstruction to proving the conjecture for all large $n$ decomposes into three cases. Case (C1) consists of integers with a squared large prime factor. Case (C2) consists of integers with two distinct large prime factors. Case (C3) consists of integers divisible by a medium prime that is individually bad for the distribution of primes in the class $-1\bmod p$.

\subsection{A conditional result under Elliott--Halberstam}

Assume the Elliott--Halberstam conjecture \cite{EH68} at some level $\vartheta>1/2$, namely that for every $A>0$,
\[
\sum_{q\le x^\vartheta}\max_{\gcd(a,q)=1}\max_{y\le x}|E(y;q,a)|\ll_A\frac{x}{(\log x)^A}.
\]

\begin{proposition}\label{prop:EH}
Assume Elliott--Halberstam at level $\vartheta>1/2$. If one reruns the density argument of Section~\ref{sec:density} with threshold $Q_{\rm EH}=x^\vartheta/(\log x)^B$ in place of $Q=x^{1/2}/(\log x)^B$, then the large-prime cases disappear. No integer $n\le x$ can satisfy $p^2\mid n$ with $p>Q_{\rm EH}$, nor $p_1p_2\mid n$ with distinct $p_1,p_2>Q_{\rm EH}$.
\end{proposition}
\begin{proof}
If $p>Q_{\rm EH}$, then $p^2>x^{2\vartheta}/(\log x)^{2B}>x$ for large $x$, since $2\vartheta>1$. Hence $p^2\nmid n$ for $n\le x$. Likewise, $p_1p_2>x$ for distinct $p_1,p_2>Q_{\rm EH}$.
\end{proof}

Under Elliott--Halberstam, after replacing the Bombieri--Vinogradov threshold by $Q_{\rm EH}$, the only remaining obstruction is the analogue of case~(C3) in the enlarged medium range. A specific prime divisor of~$n$ may still be individually exceptional. This is controlled on average, but not pointwise.

Recent work of Maynard~\cite{Maynard25} establishes unconditionally, for a fixed residue class, mean value theorems for primes in progressions to moduli of size $x^{1/2+\delta}$ with $\delta>0$, extending Bombieri--Vinogradov beyond the classical $x^{1/2}$ barrier up to the exponent $11/21$. This would in principle allow the analysis of Section~\ref{sec:density} to be rerun with a slightly larger $Q$, further thinning the large-prime exceptional set. Since the companion-sieve argument already gives density~$1$ unconditionally, the gain would be in the effective threshold $N^*(P_0)$ rather than in the asymptotic statement.

\subsection{Three conjectures}

The analysis identifies three conjectures. The first and third address directly the residual obstruction in the companion-sieve analysis. The second, motivated by the Chebyshev bias visible in the numerical data for the first, is an independent statement on the distribution of the leading exceptions. All three are supported by extensive computation and sit strictly between current unconditional technology and the pointwise estimates predicted by Montgomery.

The first conjecture asks that, at the square window $q=p^2$, the count of primes in the progression $-1\bmod p$ never drops below two.

\begin{conjecture}\label{conj:SQ2}
For every prime $p\ge17$,
\[
\picount(p^2;\,p,\,-1)\ge 2.
\]
\end{conjecture}

In other words, there are at least two primes $q\equiv -1\pmod{p}$ below $p^2$. The expected count is $\picount(p^2;p,-1)\sim p/(2\log p)\to\infty$, so the conjecture is consistent with the prime number theorem in arithmetic progressions, but it lies at the boundary $q=x^{1/2}$, beyond the reach of the standard pointwise consequences of the Generalized Riemann Hypothesis and outside the scope of Bombieri--Vinogradov. It may be viewed as a second-prime analogue of Linnik's problem in the class $-1\bmod p$. Computation up to $p=20000$ shows that only $p=5$ and $p=13$ fail, and $\picount(p^2;p,-1)\ge 3$ for all $p\ge 17$ in this range. The ratio $R_p:=\picount(p^2;p,-1)\big/\bigl(p/(2\log p)\bigr)$ has mean $\approx 1.07$ over $3\le p\le 1000$, confirming close agreement with the heuristic.

A probabilistic heuristic clarifies why the conjecture is expected to hold.

\begin{remark}\label{rem:cramer}
A Cram\'er-type model for primes in the progression $-1\bmod p$ predicts that $\picount(p^2;p,-1)$ is approximately Poisson with mean $\lambda_p=p/(2\log p)$, so that the probability of failure $\picount(p^2;p,-1)\le 1$ decays as $e^{-\lambda_p}(1+\lambda_p)$, summable over primes. A Borel--Cantelli argument under this model would predict only finitely many exceptional $p$, consistent with the observed behavior. The heuristic must however be applied with care at the boundary $q=x^{1/2}$, where Maier's phenomenon \cite{Maier85} produces fluctuations beyond the Poisson scale, and where Chebyshev's bias in the sense of Rubinstein--Sarnak \cite{RubinsteinSarnak94} acts on the progression $-1\bmod p$: the class $-1$ is a quadratic residue modulo $p$ exactly when $p\equiv 1\pmod 4$, in which case primes tend to be slightly underrepresented. A direct computation for $3\le p\le 1000$ gives mean ratios $R_p\approx 1.04$ on the $80$ primes $p\equiv 1\pmod 4$ and $R_p\approx 1.09$ on the $87$ primes $p\equiv 3\pmod 4$, and the two failures of Conjecture~\ref{conj:SQ2} both lie in the first class.
\end{remark}

The observation at the end of Remark~\ref{rem:cramer} motivates a second conjecture, isolating the Chebyshev asymmetry in the relevant window.

\begin{conjecture}\label{conj:CB}
As $P\to\infty$,
\[
\frac{1}{\pi(P;4,1)}\sum_{\substack{3\le p\le P\\ p\equiv 1\,(4)}}R_p
\;<\;
\frac{1}{\pi(P;4,3)}\sum_{\substack{3\le p\le P\\ p\equiv 3\,(4)}}R_p,
\]
and both averages tend to $1$. Equivalently, the deficit is a lower-order Chebyshev correction, not a departure from the leading asymptotic.
\end{conjecture}

Conjecture~\ref{conj:CB} is a quantitative strengthening consistent with the Rubinstein--Sarnak framework \cite{RubinsteinSarnak94} applied to the specific height $x=p^2$. It is independent of Conjecture~\ref{conj:SQ2} but provides a conceptual explanation for why the (finitely many) failures of that conjecture concentrate in the class $p\equiv 1\pmod 4$.

If Conjecture~\ref{conj:SQ2} holds, then for every large prime $p$ and every $n$ with $p^2\mid n$, the $1$-hit stock satisfies $S_p^{(1)}(n-1)\ge 2$, giving $\delta_p^{(1)}(n)=0$ in the large-prime regime. This eliminates case~(C1).

The third conjecture addresses case~(C3) and asks for pointwise control of the Dirichlet error term at prime divisors of $n$, up to a level $\vartheta$.

\begin{conjecture}\label{conj:DS}
Let $\vartheta\in(0,1)$. For every $A>0$, there exists $B>0$ such that for all sufficiently large integers $n$ and every prime $p\mid n$ with $p\le n^\vartheta/(\log n)^B$,
\[
\bigl|E(n-1;\,p,\,-1)\bigr|\le\frac{\Li(n)}{(p-1)(\log n)^A}.
\]
\end{conjecture}

This asks that the prime divisors of $n$ up to level $n^\vartheta$ are never individually bad moduli. At $\vartheta=1/2$, it is stronger in nature than Bombieri--Vinogradov, since it is pointwise on divisors of $n$ rather than averaged over moduli, but strictly weaker than Montgomery's pointwise conjecture \cite{Montgomery71} $E(x;q,a)\ll_\varepsilon x^{1/2+\varepsilon}q^{-1/2}$. It is not implied by the Elliott--Halberstam conjecture in its standard averaged form. Numerical evidence up to $n=10^6$ shows rapid decay of the normalized error at prime divisors in the medium range.

If Conjecture~\ref{conj:DS} holds at level $\vartheta=1/2$, the good-modulus argument of Lemma~\ref{lem:good} applies to every medium prime divisor of $n$, eliminating case~(C3) in the original Bombieri--Vinogradov range.

\medskip

Conjectures~\ref{conj:SQ2} and~\ref{conj:DS} address two distinct pieces of the residual obstruction: the former removes the square case~(C1), the latter removes the bad-medium-modulus case~(C3). The remaining unresolved case is then~(C2), involving two distinct large prime factors. If one further assumes Elliott--Halberstam at some level $\vartheta>1/2$ and reruns the argument with threshold $Q_{\rm EH}=x^\vartheta/(\log x)^B$, then the large-prime cases~(C1) and~(C2) both disappear (Proposition~\ref{prop:EH}), so Conjecture~\ref{conj:DS} at level~$\vartheta$ would imply Conjecture~\ref{conj:main} for all sufficiently large~$n$.

\subsection{Further questions}

Is $t_0(p)\le 3$ for all primes? Can the larger twin primes be completely excluded from the sequence $(c_n)$, beyond the $q^2$~barrier of Proposition~\ref{prop:barrier-twin}? Do the $K=3,4,\ldots$ variants of the recurrence encode other prime-gap conjectures?



\begin{thebibliography}{99}

\bibitem{CRR11}
F.~Chamizo, D.~Raboso and S.~Ruiz-Cabello, On Rowland's sequence, Electron. J. Combin. 18(2) (2011) Paper 10.

\bibitem{Davenport}
H.~Davenport, Multiplicative Number Theory, 3rd edn., Graduate Texts in Mathematics, Vol.~74 (Springer, New York, 2000).

\bibitem{EH68}
P.~D.~T.~A.~Elliott and H.~Halberstam, A conjecture in prime number theory, Symposia Mathematica 4 (1968) 59--72.

\bibitem{FriedlanderIwaniec}
J.~Friedlander and H.~Iwaniec, Opera de Cribro, American Mathematical Society Colloquium Publications, Vol.~57 (American Mathematical Society, Providence, 2010).

\bibitem{HalberstamRichert}
H.~Halberstam and H.-E.~Richert, Sieve Methods, London Mathematical Society Monographs, Vol.~4 (Academic Press, London, 1974).

\bibitem{HL23}
G.~H.~Hardy and J.~E.~Littlewood, Some problems of `Partitio Numerorum' III: On the expression of a number as a sum of primes, Acta Math. 44 (1923) 1--70.

\bibitem{HB92}
D.~R.~Heath-Brown, Zero-free regions for Dirichlet $L$-functions and the least prime in an arithmetic progression, Proc. London Math. Soc. (3) 64(2) (1992) 265--338.

\bibitem{IwaniecKowalski}
H.~Iwaniec and E.~Kowalski, Analytic Number Theory, American Mathematical Society Colloquium Publications, Vol.~53 (American Mathematical Society, Providence, 2004).

\bibitem{Linnik44}
U.~V.~Linnik, On the least prime in an arithmetic progression. I. The basic theorem, Rec. Math. (Mat. Sb.) N.S. 15(57) (1944) 139--178.

\bibitem{Maier85}
H.~Maier, Primes in short intervals, Michigan Math. J. 32(2) (1985) 221--225.

\bibitem{Maynard25}
J.~Maynard, Primes in arithmetic progressions to large moduli~I: Fixed residue classes, Mem. Amer. Math. Soc. 306(1542) (2025).

\bibitem{McEachen25}
J.~McEachen, Comment on OEIS sequence A135508, September 2025, in \cite{OEIS}.

\bibitem{Montgomery71}
H.~L.~Montgomery, Topics in Multiplicative Number Theory, Lecture Notes in Mathematics, Vol.~227 (Springer, Berlin, 1971).

\bibitem{MontgomeryVaughan73}
H.~L.~Montgomery and R.~C.~Vaughan, The large sieve, Mathematika 20 (1973) 119--134.

\bibitem{MV}
H.~L.~Montgomery and R.~C.~Vaughan, Multiplicative Number Theory~I: Classical Theory, Cambridge Studies in Advanced Mathematics, Vol.~97 (Cambridge University Press, Cambridge, 2007).

\bibitem{OEIS}
OEIS Foundation Inc., The On-Line Encyclopedia of Integer Sequences, published electronically at \url{https://oeis.org}, sequences A135504 and A135508, accessed April 2026.

\bibitem{Polymath14}
D.~H.~J.~Polymath, Variants of the Selberg sieve, and bounded intervals containing many primes, Res. Math. Sci. 1 (2014) Art.~12.

\bibitem{Rowland}
E.~S.~Rowland, A natural prime-generating recurrence, J. Integer Seq. 11(2) (2008) Art.~08.2.8.

\bibitem{RubinsteinSarnak94}
M.~Rubinstein and P.~Sarnak, Chebyshev's bias, Experiment. Math. 3(3) (1994) 173--197.

\bibitem{RuizCabello2017}
S.~Ruiz-Cabello, On the use of the least common multiple to build a prime-generating recurrence, Internat. J. Number Theory 13(4) (2017) 819--833.

\bibitem{Schepke}
M.~Schepke, {\"U}ber Primzahlerzeugende Folgen, Thesis, Leibniz Universit{\"a}t Hannover (2009).

\bibitem{Tenenbaum}
G.~Tenenbaum, Introduction to Analytic and Probabilistic Number Theory, 3rd edn., Graduate Studies in Mathematics, Vol.~163 (American Mathematical Society, Providence, 2015).

\bibitem{Xylouris2011}
T.~Xylouris, On Linnik's constant, Acta Arith. 150(1) (2011) 65--91.

\end{thebibliography}
\end{document}